
\documentclass[12pt]{amsproc}
\usepackage{amsmath,amsthm}
\usepackage{amsfonts,amssymb}
\hfuzz1pc 


\newtheorem{thm}{Theorem}[section]
\newtheorem{cor}[thm]{Corollary}

\theoremstyle{remark}

\renewcommand{\theequation}{\thesection.\arabic{equation}}


\def\CH{{\mathcal H}}

\def\CB{{\mathcal B}}

\def\C{{\mathbb C}}
\def\H{{\mathbb H}}
\def\N{{\mathbb N}}
\def\R{{\mathbb R}}

\def\Z{{\mathbb Z}}
\def\T{{\mathbb T}}

\def\Nn{{\mathbb N^n}}
\def\Rn{{\mathbb R}^{n}}
\def\Cn{{\mathbb C}^n}
\def\Hn{{{\mathbb H}^n}}
\def\Lc{{L^1(\mathbb C^n)}}
\def\Lh{{L^1(\mathbb H^n)}}
\def\Lr{{L^2(\mathbb R^n)}}
\def\BLR{{\mathcal B(L^2(\mathbb R^n))}}
\def\llh{{L^{2}(\mathbb H^n)}}
\def\lam{{\lambda}}
\def\phab{{\overline{\Phi}_{\alpha \beta}}}
\def\phba{{\overline{\Phi}_{\beta \alpha}}}
\def\phmn{{\overline{\Phi}_{\mu \nu}}}
\def\qab{{Q_{\alpha \beta}}}
\def\qba{{Q_{\beta \alpha}}}
\def\qac{{Q_{\alpha \gamma}}}
\def\qca{{Q_{\gamma \alpha}}}
\def\qbc{{Q_{\beta \gamma}}}
\def\qmn{{Q_{\mu \nu}}}

\def\qbb{{Q_{\beta \beta}}}
\def\qaa{{Q_{\alpha \alpha}}}
\def\uaj{{u_{\alpha}^j}}
\def\uak{u_{\alpha}^k}
\def\vabj{{v_{\alpha,\beta}^j}}
\def\vack{{v_{\alpha,\gamma}^k}}

\def\uuaj{{U_{\alpha}^j}}
\def\uuajs{{U_{\alpha}^{j*}}}
\def\pn{{(2\pi)^{\frac{n}{2}}}}
\def\pnn{{(2\pi)^{-\frac{n}{2}}}}
\def\haj{{\mathcal{H}_{\alpha}^j}}
\def\hak{{\mathcal{H}_{\alpha}^k}}
\def\ha{{\mathcal{H}_{\alpha}}}
\def\hap{{\mathcal{H}_{\alpha}^{\perp}}}
\def\fl{{f^{\lambda}}}

\def\rzt{{R_{(z,t)}}}

\def\pl{{\pi_\lambda}}
\def\plzts{{\pi_\lambda}(z,t)^*}
\def\plzt{{\pi_{\lambda}(z,t)}}

\def\rl{{\rho_\lambda}}

\def\tl{{T_\lambda}}
\def\sl{{S_\lambda}}
\def\Fl{{F_\lambda}}
\def\zt{{(z,t)}}

\def\zo{{(z,0)}}

\def\ws{{(w,s)}}
\def\eilt{{e^{i \lambda t}}}
\def\emilt{{e^{-i \lambda t}}}

\def\emils{{e^{-i \lambda s}}}
\def\plam{{\Phi_{\lambda}}}
\def\flam{{F_{\lambda}}}
\def\phsi{{\varphi, \psi}}

\def\nm{{\|}}

\def\rot{{R_{(0,t)}}}

\def\1{\text{\bf {1}}}

\begin{document}

\title[A characterisation of the Fourier transform on $\H^n$] {A characterisation of the Fourier transform on the
Heisenberg group}
\author{R. Lakshmi Lavanya and S. Thangavelu}

\address{Ramanujan Institute for Advanced Study in Mathematics\\ $~$University of Madras\\
 Chennai-600 005} \email{rlakshmilavanya@gmail.com}

\address{Department of Mathematics\\ Indian Institute
of Science\\Bangalore-560 012} \email{veluma@math.iisc.ernet.in}

\date{\today}
\keywords{} \subjclass{}

\begin{abstract}
The aim of this paper is to show that any $~$continuous
$*$-homomorphism of $L^1(\C^n)$(with twisted convolution as
multiplication) into $\CB(L^2(\Rn))$ is essentially a Weyl
transform. From this we deduce a similar characterisation for the
group Fourier $~$transform on the Heisenberg group, in terms of
convolution.

\end{abstract}

\maketitle

\section{Introduction}
\setcounter{equation}{0} The behaviour of the Fourier transform
under translations, $~$dilations, modulations and differentiation
is well known.  It is an interesting fact that a few of these
properties are characteristic of the Fourier $~$transform. Several
characterisations of the Fourier transform were done in \cite{Em},
\cite{Fi}, \cite{Ko}, \cite{Lu1} and \cite{Lu2}.   A well known
property of the Fourier transform is that it takes convolution
product into pointwise $~$product. $~$Conversely, is there any
relation between the Fourier transform and a map which converts
convolution product into pointwise product?  $~$Recently, a
characterisation for the Fourier transform on $\R^n$ was done in
\cite{AAM2} without assuming the map to be linear or continuous.
In \cite{Ja}, Jaming proved such characterisations for the groups
$\Z/n\Z$ and $\Z$(\cite{Ja}, Theorem 2.1), $\Rn$ and
$\T^n$(\cite{Ja}, Theorem 3.1).  We state below the result of
Jaming for the case $\Rn$ and $\T^n$:

\begin{thm}

Let $n \geq 1$ be an integer and $G=\R^n$ or $G=\T^n$.  Let $T$ be
a continuous linear operator $L^1(G) \rightarrow
C(\widehat{G})$(here $\widehat{G}$ denotes the dual group of $G$)
such that $T(f*g) = T(f) \ T(g).$ Then there exists a set
$E\subset \widehat{G}$ and a function $\varphi:
\widehat{G}\rightarrow \widehat{G}$ such that $T(f)(\xi) =
\chi_{E}(\xi) \ \widehat{f}(\varphi(\xi)).$
\end{thm}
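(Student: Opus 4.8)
The plan is to exploit the fact that, for each fixed point $\xi$ of the dual group, evaluation of $T(f)$ at $\xi$ is a multiplicative linear functional on the convolution algebra $L^1(G)$, and then to invoke the classical identification of the Gelfand spectrum of $L^1(G)$ with $\widehat{G}$.

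First I would fix $\xi \in \widehat{G}$ and define $L_\xi : L^1(G) \to \mathbb{C}$ by $L_\xi(f) = T(f)(\xi)$. Since $T$ is linear and maps into $C(\widehat{G})$, and evaluation at $\xi$ is linear, $L_\xi$ is a well-defined linear functional; the hypothesis $T(f*g) = T(f)\,T(g)$ (pointwise product on the right) gives $L_\xi(f*g) = T(f*g)(\xi) = T(f)(\xi)\,T(g)(\xi) = L_\xi(f)\,L_\xi(g)$, so $L_\xi$ is an algebra homomorphism $L^1(G) \to \mathbb{C}$. There are now two cases: either $L_\xi \equiv 0$, or $L_\xi$ is a nonzero character of the Banach algebra $(L^1(G),*)$.

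Next, for those $\xi$ with $L_\xi \not\equiv 0$ I would use two standard facts. First, a nonzero algebra homomorphism from a Banach algebra into $\mathbb{C}$ is automatically bounded (of norm $\le 1$); in particular $L_\xi$ is continuous, so the continuity hypothesis on $T$ is not even needed for this argument. Second, for a locally compact abelian group $G$ the maximal ideal space of $L^1(G)$ is $\widehat{G}$, with the Gelfand transform being the Fourier transform: every nonzero character of $(L^1(G),*)$ has the form $f \mapsto \widehat{f}(\eta)$ for a unique $\eta \in \widehat{G}$. Applying this to $L_\xi$ produces a unique point, which I call $\varphi(\xi) \in \widehat{G}$, such that $T(f)(\xi) = \widehat{f}(\varphi(\xi))$ for all $f \in L^1(G)$.

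Finally I would assemble the conclusion. Set $E = \{\xi \in \widehat{G} : L_\xi \not\equiv 0\}$ and extend $\varphi$ to all of $\widehat{G}$ by assigning it an arbitrary value (say $0$) on $\widehat{G}\setminus E$. For $\xi \in E$ the previous step gives $T(f)(\xi) = \widehat{f}(\varphi(\xi)) = \chi_E(\xi)\,\widehat{f}(\varphi(\xi))$, while for $\xi \notin E$ we have $T(f)(\xi) = 0 = \chi_E(\xi)\,\widehat{f}(\varphi(\xi))$; in both cases $T(f)(\xi) = \chi_E(\xi)\,\widehat{f}(\varphi(\xi))$, which is the assertion. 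The only genuinely substantial ingredient is the description of the spectrum of $L^1(G)$, so the ``hard part'' is really just recognising that the problem decouples pointwise in $\xi$ and reduces to this classical theorem. No regularity of $E$ or of $\varphi$ is claimed or obtained at this level of generality; the continuity of $T$ only guarantees, a posteriori, that $\xi \mapsto \chi_E(\xi)\,\widehat{f}(\varphi(\xi))$ lies in $C(\widehat{G})$ for each $f$.
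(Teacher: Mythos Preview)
Your argument is correct: pointwise evaluation turns $T$ into a family of multiplicative linear functionals on $L^1(G)$, and the Gelfand identification of the spectrum of $L^1(G)$ with $\widehat{G}$ finishes the job. There is no gap.

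However, there is nothing to compare against in this paper. Theorem~1.1 is not proved here; it is quoted from Jaming~\cite{Ja} as motivation, and the authors' own work begins in Section~3 with the Weyl transform. So the paper contains no proof of this statement at all. For the record, your approach via Gelfand theory is exactly the standard one (and is essentially what Jaming does as well): the only content is recognising that each $L_\xi$ is a character of the commutative Banach algebra $(L^1(G),*)$ and invoking the classical description of its maximal ideal space.
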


In the same paper(\cite{Ja}) he posed a question, which leads to
that of the characterisation of the Weyl transform in terms of the
twisted $~$convolution. Here we attempt to prove such a
characterisation and $~$deduce a similar one for the Heisenberg
group Fourier transform. $~$Before stating our results, we recall
a few standard notations and terminology as in \cite{Fo},
\cite{Th1} and \cite{Th2}.

\section{Notations and preliminaries}
\setcounter{equation}{0}

The $(2n+1)-$ dimensional Heisenberg group $\Hn$ is the nilpotent
Lie group whose underlying manifold is $\Cn \times \R$. $\Hn$
forms a $~$noncommutative group under the operation
$$(z,t)(w,s) = \left(z+w, t+s+ \frac{1}{2} Im(z.\overline{w})\right), \ (z,t),(w,s) \in \Hn.$$
The Haar measure on $\Hn$ is the Lebesgue measure $dz \ dt$ on
$\Cn \times \R$.  By the Stone-von Neumann theorem, all the
infinite-dimensional $~$irreducible unitary representations of
$\Hn$, acting on $L^2(\Rn)$, are parametrised by $\lambda \in
\R^{*},$ and are given by
$$\pi_{\lam}(z,t) \varphi(\xi) = e^{i\lam t} \ e^{i\lam(x\cdot\xi+ \frac{1}{2} x\cdot y)} \ \varphi(\xi+y) ,
\ \xi \in \Rn, \ \varphi \in L^2(\Rn),$$ and $z= x+iy \in \Cn.$
 The group Fourier transform of an integrable function $f$ on $\Hn$
is defined as $$\widehat{f}(\lam) = \int_{\Hn} f(z,t) \ \pi_\lam
(z,t) \ dz \ dt, \ \lam \in \R^{*}.$$  Then $\widehat{f}(\lam) \in
\BLR$ with $\|\widehat{f}(\lam)\|_{op} \leq \|f\|_{1}.$ \\

\noindent Let $f*g$ be the convolution of functions $f,g$ on
$\Hn$, given by
$$(f*g)(z,t) = \int_{\Hn} f((z,t)(-w,-s)) \ g(w,s) \ dw \ ds, \ (z,t) \in
\Hn.$$ Then the group Fourier transform satisfies\\

\noindent \textbf{Property 1.}  $(\widehat{f^*})(\lam) =
\widehat{f}(\lam)^*$ for all $\lam \in \R^*$,
 where\\ $f^*(z,t) = \overline{f(-z,-t)}$
and $(\widehat{f}(\lam))^*$ is the adjoint of the operator in
$\BLR$.\\

\noindent \textbf{Property 2.}   $(f*g) \ \widehat{} \ (\lam) =
\widehat{f}(\lam) \ \widehat{g}(\lam), \ \lam \in \R^*, \ f,g \in
\Lh$.\\

 \noindent \textbf{Property 3.}  $(R_{(z,t)} f)\widehat{} \
(\lam) = \widehat{f}(\lam) \ \pi_{\lam}(z,t)^*, \ (z,t) \in \Hn$,
where $R_{(z,t)}$  \ \ denotes the right translation given by
$$(R_{(z,t)}f)(w,s) = f(\ws \zt), \ (w,s) \in \Hn.$$

We shall prove in Section 3 that the above properties characterise
the group Fourier transform on $\Hn$.

For $f \in \Lh$ we denote by $f^{\lam}(z)$, the inverse Fourier
transform of $f$ in the $t$-variable, i.e.,
$$ \ \ f^{\lam}(z) = \int_{\R} f(z,t) \ \eilt \ dt, \ z \in \Cn.$$
We write $\pi_{\lam}(z) = \pi_{\lam}(z,0)$ so that
$\pi_{\lam}(z,t) =\eilt \ \pi_{\lam}(z)$ and
$$\widehat{f}(\lam) = \int_{\Cn} f^{\lam}(z) \
\pi_{\lam}(z) \ dz.$$
 For $\lam \in \R^*$ and $g \in \Lc$, consider the operator
$$W_{\lam}(g) = \int_{\Cn} g(z) \ \pi_{\lam}(z) \ dz.$$  When $\lam
= 1$, we call this the Weyl transform of $g$.  The $\lam$-twisted
convolution of functions $f,g \in \Lc$ is defined as
$$(f*_{\lam}g)(z) = \int_{\Cn} f(z-w) \ g(w) \ e^{i\small{\frac{\lam}{2}} Im(z.\overline{w})} \ dw, \ z \in
\Cn.$$ The convolution of functions on $\Hn$, and the
$\lam$-twisted convolution of functions on $\Cn$, are related as
$$(f*g)^{\lam}(z) = (f^{\lam}*_{\lam}g^{\lam})(z), \ z\in \Cn.$$
The operators $W_{\lam}$ are continuous, linear and map $\Lc$ into
$\BLR$.  Also, they satisfy the following properties:\\

\noindent \textbf{Property A.} $W_{\lam}(f^*) = W_{\lambda}(f)^*,
\ f \in \Lc$, where $f^*(z) = \overline{f(-z)}$.\\

\noindent \textbf{Property B.}  $W_{\lam}(f*_{\lam}g) =
W_{\lam}(f) \ W_{\lam}(g), \ f,g \in \Lc,$\\

 \noindent i.e., $W_{\lam}$ is a continuous $*$-homomorphism from $\Lc$ into
$\BLR$.  In Section 3, we shall prove the converse that any
continuous $*$-homomorphism from $\Lc$ into $\BLR$ is
essentially a Weyl transform.\\

We now recall a few properties of the Hermite and special Hermite
functions which will be of much use in proving this
characterisation.

\noindent For $k \in \N=\{0,1,2,...\}$, let
$$h_k(x) = (-1)^k \
(2^k \ k! \ \sqrt\pi)^{(-1/2)} \
\left(\frac{d^k}{dx^k}e^{-x^2}\right) \ e^{x^2/2}, \ x \in \R,$$
denote the normalised Hermite functions on $\R$.  The
multi-dimensional Hermite functions are defined as
$$\Phi_{\alpha}(x) = \prod_{j=1}^{n} h_{\alpha_j}(x_j), \
x=(x_1,...,x_n) \in \Rn, \ \alpha = (\alpha_1,...,\alpha_n)\in
\Nn.$$ The collection $\{\Phi_{\alpha} :\ \alpha \in \Nn\}$ forms
an orthonormal basis for $L^2(\Rn)$ and their linear span is dense
in $L^p(\Rn)$ for $1\leq p < \infty$. For $\lam \in \R^*$, the
scaled special Hermite functions are defined by
$$\Phi_{\alpha \beta}^{\lam} (z) = \pnn \
|\lam|^{{\frac{n}{2}}} \ \left(\pi_{\lam}(z) \
\Phi_{\alpha}^{\lam},\Phi_{\beta}^{\lam}\right), \ z \in \Cn,$$
 and they form an orthonormal basis for $L^2(\Cn)$.  Further finite linear combinations
  of special Hermite functions are dense in $L^p(\Cn)$ for\\
$1\leq p<\infty$.  Also they satisfy
\begin{eqnarray}
\label{1.1}\ \ \ \ \ \ \ \overline{\Phi}_{\alpha \beta}^\lam
*_{\lam} \ \overline{\Phi}_{\mu \nu} ^\lam (z) = \
(2\pi)^{\frac{n}{2}} \ |\lam|^{-n} \ \delta_{\alpha \nu}
 \ \ \overline{\Phi}_{\mu \beta}^{\lam}(z), \ \alpha,\beta,\mu,\nu \in \Nn.
\end{eqnarray}
 We refer to \cite{Th1} and \cite{Th2} for these properties.  We now proceed to prove our main results.

 \section{Characterisation of the Weyl transform}
 \setcounter{equation}{0}

 As recalled in Section 2, the Weyl transform is a continuous
linear map from $\Lc$ into $\BLR$ taking twisted convolution into
composition of operators.  We shall now prove the converse, thus
 answering a modified version of Jaming's question.  We remark that the proof of the following theorem is similar to that
  of the Stone-von Neumann theorem as in \cite{Fo}. Indeed, if $\rl$ is a primary representation of $\Hn$ with
  central character
  $\eilt$, then the operator defined on $\Lc$ by $$\tl(f) = \int_{\Cn} \ f(z) \ \rl \zo \ dz $$
  satisfies the hypothesis of the following theorem.  By the Stone-von Neumann theorem $\rl\zt$ is a direct
  sum of representations each of which is 
  unitarily equivalent to $\pl \zt$.  The proof makes use of the relations
  $$\tl f \ \rl(z,0) = \tl (\tau_{z}^{\lam} \ f), \ \rl(z,0) \ \tl \ f = \tl(\tau_{z}^{-\lam} f)$$
  where 
$$ \tau_{z}^{\lam} \ f(w)= f(w-z) \ e^{-i \frac{\lam}{2}\Im(w.\overline{z})}$$
 is the $\lam-$twisted $~$translation. The proof given below shows that we 
really do not need these extra properties in order to prove Stone-von 
Neumann theorem.   \\

 \begin{thm} Let $T: (\Lc,*_{\lam}) \rightarrow \BLR$ be a nonzero continuous
 homomorphism.  Then there is a subspace $\CH^{\lam}$ of $\Lr$ and a
unitary representation $\rho_{\lam}$ of $\Hn$ on $\CH^{\lam}$ such
that
 $$T(f) = \ \int_{\Cn} f(z) \ \rho_{\lam} \zo \ dz, \
 on \  \CH^{\lam},$$
and there is a decomposition $ \Lr= \CH^{\lambda} \bigoplus
V^{\lam},$
 where
 $$V^{\lam}:=\{v \in \Lr : (Tf)(v) = 0 \ \mbox{ for \ all \ } f \in
 \Lc\}.$$\end{thm}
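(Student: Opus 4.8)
The plan is to mimic the classical construction used in proving the Stone--von Neumann theorem, building the representation $\rho_\lam$ out of the operator $T$ applied to (scaled) special Hermite functions. First I would restrict attention to $\lam = 1$ for notational convenience and recall from \eqref{1.1} that, up to the constant $c = (2\pi)^{n/2}|\lam|^{-n}$, the functions $e_{\alpha\beta} := c^{-1}\phab^\lam$ satisfy the matrix-unit relations $e_{\alpha\beta} *_\lam e_{\mu\nu} = \delta_{\beta\mu}\, e_{\alpha\nu}$ after a suitable reindexing (absorbing the complex conjugation and the index swap in \eqref{1.1} into the definition). Set $P_{\alpha\beta} := T(e_{\alpha\beta})$. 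The homomorphism property of $T$ then gives $P_{\alpha\beta}P_{\mu\nu} = \delta_{\beta\mu} P_{\alpha\nu}$; in particular each $P_{\alpha\alpha}$ is idempotent, the $P_{\alpha\alpha}$ are mutually orthogonal, and $P_{\alpha\beta}$ maps $\im P_{\beta\beta}$ isometrically (after a further normalisation using Property A, which forces $P_{\alpha\beta}^* = P_{\beta\alpha}$, hence $P_{\alpha\alpha}$ is an orthogonal projection and $P_{\alpha\beta}$ a partial isometry) onto $\im P_{\alpha\alpha}$.

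Next I would fix the ``ground'' projection $P_{00}$, put $\CH_0 := \im P_{00} \subseteq \Lr$, and define $\CH^\lam := \bigoplus_\alpha \im P_{\alpha\alpha} = \overline{\sspan}\{P_{\alpha 0} v : v \in \CH_0,\ \alpha \in \Nn\}$, the closure of the range of $T$. On $\CH^\lam$ one gets a canonical orthonormal-basis-indexed family: choosing any orthonormal basis $\{v_i\}$ of $\CH_0$, the vectors $\{P_{\alpha 0} v_i : \alpha \in \Nn\}$ for each fixed $i$ span a copy of $\ell^2(\Nn) \cong \Lr$ on which $T$ acts exactly like $W_\lam$ does on the special-Hermite basis. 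Concretely, I would define a unitary $U_i$ from $\Lr$ onto the $i$-th summand sending $\Phi_\alpha^\lam \mapsto P_{\alpha 0} v_i$ (suitably normalised), and then set $\rho_\lam(z,0) := \bigoplus_i U_i \pi_\lam(z,0) U_i^*$ on $\CH^\lam$, extending to $\rho_\lam(z,t) := e^{i\lam t}\rho_\lam(z,0)$. One checks $\rho_\lam$ is a unitary representation of $\Hn$ because $\pi_\lam$ is and the $U_i$ are unitary; the multiplier relation $\pi_\lam(z,0)\pi_\lam(w,0) = e^{\frac{i}{2}\lam\,\Im(z\cdot\bar w)}\pi_\lam(z+w,0)$ transfers verbatim. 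The identity $T(f) = \int_{\Cn} f(z)\rho_\lam(z,0)\,dz$ on $\CH^\lam$ then follows by checking it on the dense span of the $e_{\alpha\beta}$: $T(e_{\alpha\beta}) = P_{\alpha\beta}$ by definition, while $\int e_{\alpha\beta}(z)\rho_\lam(z,0)\,dz = \bigoplus_i U_i W_\lam(e_{\alpha\beta}) U_i^* = \bigoplus_i U_i\, c^{-1}\phab\text{-matrix-unit}\, U_i^*$, which equals $P_{\alpha\beta}$ by the way $U_i$ was defined; continuity of both sides in $f$ extends this to all of $\Lc$.

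Finally, the decomposition $\Lr = \CH^\lam \oplus V^\lam$ comes almost for free: $V^\lam$ as defined is visibly a closed subspace (intersection of kernels of the bounded operators $T(f)$), and I claim it is exactly $(\CH^\lam)^\perp = (\overline{\im T})^\perp$. Indeed, using Property A we have $\ker T(f) = \ker T(f)^* T(f) \supseteq \ker T(f^* *_\lam f)$ type manipulations; more directly, $v \perp \im T(g)$ for all $g$ iff $\langle v, T(g)w\rangle = 0$ for all $g,w$ iff $\langle T(g^*) v, w\rangle = 0$ for all $g,w$ iff $T(g^*)v = 0$ for all $g$ iff $v \in V^\lam$ (since $g \mapsto g^*$ is a bijection of $\Lc$). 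The main obstacle I anticipate is the \emph{surjectivity/completeness} step: showing that $\CH^\lam$ together with $V^\lam$ exhausts $\Lr$, equivalently that $\sum_\alpha P_{\alpha\alpha}$ converges strongly to the projection onto $\CH^\lam$ and leaves nothing unaccounted for outside $V^\lam$ — this requires knowing the $e_{\alpha\beta}$ (equivalently $\phab^\lam$) form a basis dense enough that no nonzero vector $v$ can have $T(g)v = 0$ for every $g$ while still lying in $\overline{\im T}$; the density of finite linear combinations of special Hermite functions in $L^1(\Cn)$, quoted in Section 2, is what drives this, but assembling it into a clean orthogonal decomposition is the delicate point.
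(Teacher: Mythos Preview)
Your proposal is correct and follows essentially the same construction as the paper's: both build matrix units from $T$ applied to (conjugate) special Hermite functions, split $\Lr$ via the resulting orthogonal projections, choose an orthonormal basis of the ground range, and conjugate $\pi_\lam$ by the induced unitaries to produce $\rho_\lam$. Your worry about the ``completeness'' step is in fact already resolved by your own argument---once you have $\CH^\lam=\overline{\im\,T}$ (from density of special Hermite functions in $L^1(\Cn)$, since $P_{\mu\nu}w=P_{\mu\mu}P_{\mu\nu}w\in\im P_{\mu\mu}$) and $V^\lam=(\overline{\im\,T})^\perp$ (from $T(g^*)=T(g)^*$, exactly as you wrote), the decomposition $\Lr=\CH^\lam\oplus V^\lam$ is automatic; the paper reaches the same conclusion by instead verifying directly, via repeated use of the matrix-unit relations, that every $Q_{\mu\nu}$ annihilates $\CH_\alpha^\perp$.
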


\noindent \begin{proof}  It suffices to prove the result when
$\lam=1$ as the general case follows similarly.  We let $f \times
g:=f*_{\lam}g$ and we will drop all subscripts and superscripts
involving $\lam(=1)$.\\

 For $\alpha, \beta \in \Nn,$ let $\qab = \pnn \ T(\phab)$.  Then
\begin{eqnarray}
\nonumber  \qab \ \qmn &=& \ (2\pi)^{-n} \ T(\phab \times \phmn) \hspace{0.3cm} \mbox{(by hypothesis)} \\
\nonumber  &=&  \delta_{\alpha \nu} \ \pnn \
T(\overline{\Phi}_{\mu \beta})
 \hspace{0.9cm} \mbox{(by \ (\ref{1.1}))}\\
  \label{2.1}\mbox{i.e.,} \ \ \ \qab \ \qmn &= &\delta_{\alpha \nu} \ Q_{\mu
  \beta}.
\end{eqnarray}
For $\alpha, \beta \in \Nn$ and $v,w \in \Lr,$
\begin{eqnarray}
\nonumber  \pn \ (\qab \ v,w) &=& (v,T(\phab)^* \ w) \\
\nonumber   &=&  (v,T(\phba) \ w)\\
 \label{2.2} \ \mbox{i.e., \ \ \ \ \ } \qab^* &=& \qba, \ \alpha, \ \beta \in \Nn.
\end{eqnarray}
Note that for each $\alpha \in \Nn, \ \qaa \neq 0.$  To see this
suppose $\qaa = 0$ for some $\alpha \in \Nn$.  Then
$$\qba \ u =  \ \qaa \ \qba \ u = 0 \ \mbox{for \ any \
}\beta \in \Nn, \ u \in \Lr.$$
 Similarly,
 $$\qac \ u=  \ \qac \ \qaa \ u = 0 \mbox{ \ for \ any \
} \gamma \in \Nn,\ u \in \Lr.$$  For arbitrary $\beta, \ \gamma
\in \Nn, \ u \in \Lr$,
$$\qbc \ u = \ \qac \ \qba \  u= 0.$$ This implies $T = 0$,
a contradiction.  Thus $\qaa \neq 0$ for any $\alpha \in \Nn$.
\\

Let $\alpha \in \Nn$.  Then the range $R(\qaa)$ of $\qaa$ is
non-zero.  Let $\{\uaj\}_{j=1}^{\infty}$ be an orthonormal basis
of $R(\qaa).$  For $\beta \in \Nn,$ define \\
$\vabj = \qab \
\uaj$.
 Then
\begin{eqnarray}
\nonumber(\vabj, \vack)&=& (\qca \ \qab \ \uaj, \ \uak) \hspace{2cm} \mbox{(by \ (\ref{2.2}))}\\
\nonumber   &=&  \delta_{\beta \gamma} \  \ (\qaa \ \uaj, \ \uak) \hspace{2cm} \mbox{(by \ (\ref{2.1}))}\\
\label{onb}   &=& \delta_{\beta \gamma} \ \delta_{jk}.
\end{eqnarray}

\noindent In particular, \ $\{\vabj\}_{\beta \in \Nn}$ is an
orthonormal set.\\

Let $\CH_{\alpha}^j$ be the Hilbert space with $\{\vabj\}_{\beta
\in \Nn}$ as an orthonormal basis.  Define $\uuaj: \Lr \rightarrow
\CH_{\alpha}^j$ by $\uuaj(\Phi_{\beta}) = \vabj, \ \beta \in \Nn.$
 Let $$S_{\alpha}^j(f) = \uuaj \ W(f) \ \uuajs, \ f\in \Lc.$$ For
$v=\sum_{\beta} \ c_\beta \ \vabj \in \CH_{\alpha}^j$, using the
relation $\ W(\phmn) \ \Phi_\beta = \pn \ \delta_{\beta \mu} \
\Phi_\nu$, we have
\begin{eqnarray}
\nonumber    S_{\alpha}^j(\phmn) v &=&  \uuaj \ W(\phmn) \ \left( \sum_{\beta} \ c_\beta \ \Phi_\beta \right)\\
\nonumber   &=&  \pn \ \uuaj \ c_\mu \ \Phi_{\nu}\\
  \label{2.3} \mbox{i.e., \ }S_{\alpha}^j(\phmn) v   &=& \pn \ c_\nu \
v_{\alpha,\nu}^j
\end{eqnarray}

\noindent On the other hand
\begin{eqnarray}
\nonumber   T(\phmn) v&=&  \pn \ \sum_{\beta} \ c_{\beta} \ \qmn \ \qab \ \uaj\\
\nonumber   &=&  \pn \ \ c_\mu \ Q_{\alpha \nu} \ \uaj \ \ \ \ \ \mbox{(by \ (\ref{2.1}))}\\
 \mbox{i.e., \ } \label{2.4} T(\phmn) v&=&  \pn \ \ c_\nu \ v_{\alpha,\nu}^j
\end{eqnarray}
From (\ref{2.3}) and (\ref{2.4}), we get
\begin{eqnarray*}
  T(\phmn) v &=&  \ (\uuaj \ W(\phmn) \ \uuajs) \ v , \ \mbox{for  \ all} \ v \in \haj, \ \mu,\nu \in \Nn.
\end{eqnarray*}
This gives
\begin{eqnarray*}
  (\theequation)  \hspace{1cm} \label{tfhaj}\ T(f)|_{\haj}&=& \ \int_{\Cn} f(z) \
   \uuaj \ \pi_{1}(z) \ \uuajs \ dz, \ f\in \Lc.\hspace{1cm}
\end{eqnarray*}

\noindent Note that (\ref{onb}) implies that the spaces $\haj$ and
$\hak$ are orthogonal to each other when $j \neq k$.\\

Let $\CH_{\alpha} = \bigoplus _{j=1}^{\infty} \ \haj$ and write
$\Lr = \CH_{\alpha} \bigoplus V_1.$  Equation (\ref{tfhaj}) gives
a complete description of $T$ on $\ha$ and our next task is to
obtain one for $T|_\hap$.  For this we first show that the range
$R(\qab) \subseteq \ha$ for all $\beta \in \Nn.$  If $ v \in
R(\qab)$, then using (\ref{2.1}) we get
\begin{eqnarray*}
  v &=&  \qab \ u = \qab \ \qaa \ u \mbox{ \ for \ some \ } u \in
  \Lr.
\end{eqnarray*}
Since $\qaa \ u \in R(\qaa),$ $\qaa \ u = \sum_{j} \ c_j \ \uaj$
and so $$\hspace{-0.7cm}v = \qab \ \qaa \ u = \sum_{j} \ c_j \
\vabj\in \ha.$$ Thus $R(\qab) \subseteq \ha$ for all $\beta \in
\Nn$.  For $v \in \hap$ and $u \in \Lr$, this gives $(v, \qab \
u)=0$ for all $\beta \in \Nn$, which implies $\qba \ v = 0$ \ (by
\ref{2.2}). Thus
$$\qba =0 \mbox{ \ on \ }\hap \mbox{ \ for \ all \ } \beta \in
\Nn.$$
 \noindent By (\ref{2.1}), for $v \in \hap, \ \beta \in \Nn, \ \qbb \ v = \  \ \qab \
\qba \ v =0$. Thus
$$\qbb = 0 \mbox{ \ on \ } \hap \mbox{ \ for \ all \ } \beta \in \Nn.$$
Again for $v \in \hap$ and $u \in \Lr,$
 $$(\qab \ v,u) = (v, \qba
\ u) = \  \ (v,\qaa \ \qba \ u)=0.$$ $ \mbox{Thus \ }\qab=0 \mbox{
\ on \ } \hap \mbox{ \ for \ all \ }\beta \in \Nn$.  Finally, for
any $v \in \hap, \\ \mu,\nu \in \Nn, \ \qmn \ v = \  \ Q_{\alpha
\nu} \
Q_{\mu \alpha} \ v =0$.  This gives $T|_{\hap} = 0$.\\

We have thus obtained a collection $\{\haj\}_{j=1,2,...}$ of
mutually $~$orthogonal subspaces of $\Lr$ and unitary
representations $\rho_\alpha^j\ \zt = \uuaj \ \pi_{1}\zt \ \uuajs$
of $\Hn$, on $\haj$ such that
$$T(f)|_{\haj} =  \ \int_{\Cn} f(z) \ \rho_{\alpha}^{j} \zo \ dz, \ f\in \Lc.$$
\noindent Then $\rho_{\alpha} = \bigoplus_{j=1}^{\infty} \
\rho_\alpha^j$ is a unitary representation of $\Hn$ on $\ha$ and
$$T(f)|_{\ha} =  \int_{\Cn} \ f(z) \ \rho_{\alpha} \zo \ dz, \ f \in \Lc,$$
which is the required characterisation.
\end{proof}

 The following remarks are in order. $(L^p(\Cn),*_{\lam})$ is an algebra 
as long as $ 1 \leq p \leq 2 $ and for $ f \in L^p(\C^n), W_\lambda(f) $ is 
still a bounded linear operator on $ L^2(\R^n) $ and satisfies
$$ \| W_\lambda(f)\| \leq C \|f\|_p .$$ This follows from the fact that for 
$ \varphi, \psi \in L^2(\R^n) $ the function 
$ (\pi_\lambda(z,0)\varphi, \psi) $ belongs to $ L^{p'}(\C^n) $ whose norm is bounded by $ \|\varphi\|_2 \|\psi\|_2.$ It is therefore natural ask if an 
anlaogue of the above theorem is true for $ 1 < p \leq 2. $ A close 
examination of the proof shows that Theorem 3.1 is true for 
$(L^p(\Cn),*_{\lam})$ with $1\leq p \leq2$.

In the special case when $T$ maps $L^2(\Cn)$ into the algebra
$S_2$ of Hilbert-Schmidt operators on $\Lr$, the decomposition of
$\CH^\lam$, \\
obtained in the above result reduces to a finite sum.
\begin{cor}
Let $T: (L^2(\Cn),*_{\lam}) \rightarrow S_2$ be a nonzero
$~$continuous
 homomorphism.  Then there is a subspace $\CH^{\lam}$ of $\Lr$ and a
unitary representation $\rho_{\lam}$ of $\Hn$ on $\CH^{\lam}$ such
that
 $$T(f) = \ \int_{\Cn} f(z) \ \rho_{\lam} \zo \ dz, \
 on \  \CH^{\lam},$$
and there is a decomposition $ \Lr= \CH^{\lambda} \bigoplus
V^{\lam},$
 where
 $$V^{\lam}:=\{v \in \Lr : (Tf)(v) = 0 \ \forall \ f \in
 \Lc\}.$$  Moreover  $\CH^{\lam}$ is the direct sum of finitely many
 subspaces of $\Lr$.
 \end{cor}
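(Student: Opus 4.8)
The plan is to bootstrap from Theorem 3.1 (or rather its $L^2$-analogue, which the preceding remark asserts holds), so the only new content is the finiteness of the number of summands $\CH_\alpha^j$ in the decomposition $\CH^\lambda=\bigoplus_{\alpha,j}\CH_\alpha^j$. As before it suffices to treat $\lambda=1$ and to reuse all the notation from the proof of Theorem 3.1: $Q_{\alpha\beta}=\pnn\,T(\phab)$, the orthonormal basis $\{u_\alpha^j\}_{j\ge1}$ of $R(Q_{\alpha\alpha})$, and the subspaces $\haj$ with orthonormal basis $\{\vabj\}_{\beta\in\Nn}$. The key observation is that the hypothesis now forces $Q_{\alpha\beta}=\pnn\,T(\phab)$ to be a Hilbert--Schmidt operator for every $\alpha,\beta$, since $\phab\in L^2(\Cn)$ and $T$ maps into $S_2$.

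First I would compute $\|Q_{\alpha\alpha}\|_{S_2}^2$. By relation (\ref{2.1}) we have $Q_{\alpha\alpha}^2=Q_{\alpha\alpha}$, and by (\ref{2.2}) $Q_{\alpha\alpha}^*=Q_{\alpha\alpha}$, so $Q_{\alpha\alpha}$ is an orthogonal projection onto $R(Q_{\alpha\alpha})=\overline{\sspan}\{u_\alpha^j\}$. Hence $\|Q_{\alpha\alpha}\|_{S_2}^2=\operatorname{tr}(Q_{\alpha\alpha})=\dim R(Q_{\alpha\alpha})$, which counts exactly the number of spaces $\haj$ attached to this fixed $\alpha$. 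Since $T$ is continuous from $(L^2(\Cn),*_1)$ into $S_2$, this trace is finite: $\dim R(Q_{\alpha\alpha})=\|Q_{\alpha\alpha}\|_{S_2}^2\le C^2\,(2\pi)^{-n}\,\|\phab\|_2^2<\infty$ for each $\alpha$. So for every fixed $\alpha$ there are only finitely many $j$.

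Next I would rule out infinitely many distinct $\alpha$'s contributing. The cleanest route is to show $R(Q_{\alpha\alpha})$ is independent of $\alpha$ — indeed $U_\alpha:=Q_{\alpha\beta}$ for a fixed reference $\beta$, combined with (\ref{2.1}), shows $Q_{\beta\alpha}$ maps $R(Q_{\alpha\alpha})$ isometrically onto $R(Q_{\beta\beta})$, so all the projections $Q_{\alpha\alpha}$ have the same (finite) rank $d:=\dim R(Q_{00})$. Moreover $\ha=\bigoplus_j\haj$ has orthonormal basis $\{\vabj\}_{\beta\in\Nn,\,1\le j\le d}$, and the $\ha$ for different $\alpha$ need not be orthogonal — but in fact $\vabj=Q_{\alpha\beta}u_\alpha^j$ and by (\ref{2.1}) one checks $(v_{\alpha,\beta}^j,v_{\gamma,\delta}^k)$ is supported on a single value, so that all the $\CH_\alpha$ actually coincide as a single subspace $\CH^1$ of dimension at most $d\cdot\#\Nn$; more to the point, $\CH^1=R(Q_{00})\cdot(\text{special Hermite span})$ is already described by the finitely many columns indexed by $1\le j\le d$. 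Thus $\CH^1$ is the direct sum of the finitely many subspaces $\{\CH_0^j\}_{1\le j\le d}$ (equivalently, the representation $\rho_1$ restricted to $\CH^1$ is a direct sum of $d$ copies of $\pi_1$), which is the desired conclusion.

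The main obstacle I anticipate is the bookkeeping in the last step: showing that the a priori doubly-indexed family $\{\haj\}_{\alpha\in\Nn,\,j}$ collapses to a finite family rather than merely having finitely many $j$ for each $\alpha$. The resolution is the intertwining identity $\pi_1(z,0)\,U_\alpha^j = U_\alpha^j\,(\text{twisted translation})$ together with the fact, read off from (\ref{2.1}), that $Q_{\alpha\beta}$ provides a unitary $R(Q_{\beta\beta})\to R(Q_{\alpha\alpha})$ intertwining the corresponding representations; since $\pi_1$ is irreducible, this forces every $\CH_\alpha$ to be unitarily the same multiple of $\pi_1$, with multiplicity the common finite rank $d$, so the total number of irreducible summands of $\rho_1$ is exactly $d<\infty$. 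Once this is set up, no further computation is needed and the corollary follows from Theorem 3.1.
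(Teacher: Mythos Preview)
Your first step already proves the corollary and is a clean alternative to the paper's argument. In the proof of Theorem~3.1 the multi-index $\alpha$ is \emph{fixed} once and for all; the space denoted $\CH^{\lambda}$ in the statement is exactly $\CH_\alpha=\bigoplus_j \haj$ for that single $\alpha$, and the theorem then shows $T$ vanishes on $\CH_\alpha^\perp$. So the only thing to bound is the number of indices $j$, which is $\dim R(Q_{\alpha\alpha})$. Your observation that $Q_{\alpha\alpha}$ is a self-adjoint idempotent lying in $S_2$, hence a finite-rank orthogonal projection, finishes the proof immediately. Your entire second paragraph (worrying about infinitely many $\alpha$'s, intertwining the $\CH_\alpha$ for different $\alpha$, etc.) is unnecessary and stems from misreading the decomposition in Theorem~3.1 as indexed by $(\alpha,j)$ rather than by $j$ alone.

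For comparison, the paper does not use the projection structure of $Q_{\alpha\alpha}$. Instead it picks any $f$ with $W(f)\in S_2\setminus\{0\}$, observes that on each $\haj$ one has $T(f)=U_\alpha^j\,W(f)\,U_\alpha^{j*}$, and computes
\[
\|T(f)\|_{HS}^2 \;\ge\; \sum_{j}\sum_{\beta}\|T(f)\,v_{\alpha,\beta}^j\|_2^2
\;=\;\sum_j \|W(f)\|_{HS}^2,
\]
which forces the $j$-sum to have only finitely many terms. Your route via $\operatorname{tr}(Q_{\alpha\alpha})$ is shorter and avoids choosing an auxiliary $f$; the paper's route has the minor advantage of not needing the $*$-part of the hypothesis (it only uses that $T(\overline{\Phi}_{\alpha\alpha})$ is Hilbert--Schmidt implicitly through $T(f)$), but in practice both arguments are equally direct once Theorem~3.1 is in hand.
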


\begin{proof}
Here again we work with $\lam =1$ and drop all subscripts and
superscripts involving $\lambda$.
Proceeding as in the proof of the above theorem we obtain a
$~$sequence $\{\haj\}_{j=1,2,...}$ of mutually orthogonal
subspaces of $\Lr$ and unitary representations $\rho_\alpha^j \zt
= \uuaj \ \pi_{1}\zt \ \uuajs$, of $\Hn$ on $\haj$ such that
$$T(f)|_{\haj} =  \ \int_{\Cn} f(z) \ \rho_{\alpha}^{j} \zo \ dz, \ f\in
L^2(\Cn),$$ i.e., $T(f) = \uuaj \ W(f) \ \uuajs$ on $\haj$.  Then
$$\nm T(f) \nm _{HS}^2 = \sum_{j=1}^{\infty} \ \sum_{\beta
\in \Nn} \ \nm T(f)\vabj \nm_2^2.$$ Note that $\sum_{\beta \in
\Nn} \ \nm T(f) \vabj\nm_2^2 = \nm W(f)\nm_{HS}^2$ is independent
of $j$.
 Hence the above shows that $\haj \neq \{0\}$ only for finitely many
 $j$, and the decomposition takes the form $\ha = \bigoplus_{j=1}^{m} \ \haj$
for some $m \in \N.$
\end{proof}

\section{Characterisation of the Fourier transform on $\Hn$}
\setcounter{equation}{1}

In this section we prove a characterisation of the group
Fourier transform using Theorem 3.1 of the previous section.\\

Let $L^{\infty}(\R^*, \BLR,d\mu)$ denote the space of essentially
bounded functions on $\R^*$, taking values in $\BLR$, where $\R^*$
is equipped with the measure $d\mu(\lam) = (2 \pi)^{-n-1} \
|\lam|^n \ d\lam.$

\begin{thm}
Let $T: \Lh \rightarrow L^{\infty}(\R^*, S_{2}, d\mu)$ be a
nonzero $~$continuous linear map satisfying\\

\noindent(i) $T(f^*)(\lambda) = Tf(\lam)$, for all $\lam \in \R^*,
\ f \in \Lh,$\\

\noindent(ii)  $T(f*g)(\lam) = (Tf)(\lam) \ (Tg) (\lam), \ \lam
\in
\R^*, \ f,g \in \Lh,$ and\\

\noindent(iii) $T(\rot \ f)(\lam) = (Tf)(\lam) \ \emilt, \
\lam \in \R^*, \ f \in \Lh, \ t \in \R.$\\

\noindent Then for each $\lam \in A,$ there is a decomposition
 $\Lr = \CH^{\lam}  \bigoplus  V^{\lam}$, and a unitary
 representation $\rl$ of $\Hn$ on $\CH^{\lam}$ such that
   $$T(f)(\lam) = \int _{\Hn} \ f \zt \ \rl \zt \ dz \ dt, \mbox{ \ on \ } \CH^{\lam},$$
     where $A:= \{\lam \in \R^* : \ (Tf)(\lam) \neq 0 \mbox{ \ for \ some\ } f \in \Lh\}.$
\end{thm}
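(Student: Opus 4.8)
The plan is to reduce everything to an application of Theorem 3.1 via the Fourier transform in the central variable. First I would fix $\lambda \in A$ and define an operator $T_\lambda$ on $L^1(\C^n)$ as follows: given $g \in L^1(\C^n)$, pick any $f \in L^1(\H^n)$ with $f^\lambda = g$ (for instance $f(z,t) = g(z)\chi(t)$ with $\chi$ chosen so that $\int_\R \chi(t) e^{i\lambda t}\,dt = 1$), and set $T_\lambda(g) := (Tf)(\lambda)$. The first substantive step is to check this is well-defined, i.e.\ depends only on $g = f^\lambda$ and not on the choice of $f$. This is where hypothesis (iii) enters: using that right translation by $(0,t)$ corresponds to multiplication of $f^\lambda$ by $e^{-i\lambda t}$ together with the fact that convolution on $\H^n$ in the $t$-variable becomes ordinary convolution, one shows that if $f_1^\lambda = f_2^\lambda$ then $Tf_1$ and $Tf_2$ agree at $\lambda$. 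Concretely, if $f^\lambda = 0$ then $f$ is a limit of combinations of functions of the form $R_{(0,t)}h - R_{(0,s)}h$ smeared appropriately, so $(Tf)(\lambda) = 0$ by (iii) and continuity.

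Once $T_\lambda$ is well-defined, the next step is to verify that it satisfies the hypotheses of Theorem 3.1 with that value of $\lambda$. Linearity and continuity of $T_\lambda$ follow from those of $T$ (continuity requires the standard estimate $\|g\|_{L^1(\C^n)}$ controls the norm of a suitable lift, which is routine). The homomorphism property $T_\lambda(g_1 *_\lambda g_2) = T_\lambda(g_1) T_\lambda(g_2)$ is exactly hypothesis (ii) combined with the identity $(f_1 * f_2)^\lambda = f_1^\lambda *_\lambda f_2^\lambda$ recalled in Section 2: choosing lifts $f_1, f_2$ of $g_1, g_2$, the function $f_1 * f_2$ is a lift of $g_1 *_\lambda g_2$, so $T_\lambda(g_1 *_\lambda g_2) = (T(f_1*f_2))(\lambda) = (Tf_1)(\lambda)(Tf_2)(\lambda)$. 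Moreover $T_\lambda$ is nonzero precisely because $\lambda \in A$. Hypothesis (i) of the present theorem, $T(f^*)(\lambda) = Tf(\lambda)$, together with the relation between $f^*$ and $(f^\lambda)$ — one checks $(f^*)^\lambda = \overline{(f^\lambda)^*}$ in the twisted sense, up to a conjugation in $\lambda$ — gives that $T_\lambda$ respects the $*$-structure; but in fact Theorem 3.1 as stated only requires the homomorphism property, so (i) is used, if at all, only to ensure the representation one gets is unitary rather than merely bounded.

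Applying Theorem 3.1 to $T_\lambda$ yields a subspace $\CH^\lambda \subseteq L^2(\R^n)$, a complementary subspace $V^\lambda = \{v : (T_\lambda g)(v) = 0 \ \forall g\}$, and a unitary representation $\rho_\lambda$ of $\H^n$ on $\CH^\lambda$ with $T_\lambda(g) = \int_{\C^n} g(z)\, \rho_\lambda(z,0)\,dz$ on $\CH^\lambda$. (The target being $S_2$ lets us invoke the Corollary instead, so the decomposition is finite, though this is not needed for the statement.) The final step is to translate this back: for $f \in L^1(\H^n)$, writing $(Tf)(\lambda) = T_\lambda(f^\lambda) = \int_{\C^n} f^\lambda(z)\,\rho_\lambda(z,0)\,dz$ on $\CH^\lambda$, and then using $f^\lambda(z) = \int_\R f(z,t) e^{i\lambda t}\,dt$ together with the definition $\rho_\lambda(z,t) = e^{i\lambda t}\rho_\lambda(z,0)$ — here I must first observe that the representation $\rho_\lambda$ produced by Theorem 3.1 has central character $e^{i\lambda t}$, which is immediate from its construction as $U_\alpha^j \pi_\lambda(z,t) U_\alpha^{j*}$ — one gets $(Tf)(\lambda) = \int_{\H^n} f(z,t)\,\rho_\lambda(z,t)\,dz\,dt$ on $\CH^\lambda$, by Fubini.

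I expect the well-definedness of $T_\lambda$ — equivalently, showing that $f^\lambda = 0$ forces $(Tf)(\lambda) = 0$ — to be the main obstacle, since it is the only place the translation hypothesis (iii) is exploited and it requires a genuine density/approximation argument in $L^1(\H^n)$: one must argue that a function with vanishing partial Fourier transform at $\lambda$ can be approximated in $L^1$-norm by finite combinations $\sum c_k (R_{(0,t_k)} - R_{(0,s_k)}) h_k$ (or integrals thereof against mean-zero measures in $t$), on each of which (iii) and continuity force $T$ to vanish at $\lambda$. Everything after that is bookkeeping: matching the integral formula, checking the central character, and reassembling the $t$-integral via Fubini.
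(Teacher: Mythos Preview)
Your overall strategy matches the paper's exactly: show that $(Tf)(\lambda)$ depends only on $f^{\lambda}$, define the induced operator $S_{\lambda}$ on $L^{1}(\C^{n})$ (the paper also uses the explicit lift $g(z)\psi(t)$ with $\widehat{\psi}(-\lambda)=1$), verify it is a continuous $*_{\lambda}$-homomorphism via $(f_{1}*f_{2})^{\lambda}=f_{1}^{\lambda}*_{\lambda}f_{2}^{\lambda}$, apply Theorem~3.1, and unwind using $\rho_{\lambda}(z,t)=e^{i\lambda t}\rho_{\lambda}(z,0)$.

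The one substantive difference is the well-definedness step, which you correctly flag as the crux. The paper does \emph{not} argue by approximation. Instead it fixes $\varphi,\psi\in L^{2}(\R^{n})$, uses continuity of $f\mapsto (T_{\lambda}(f)\varphi,\psi)$ on $L^{1}(\H^{n})$ to produce, via Riesz, a kernel $F_{\lambda}(\,\cdot\,;\varphi,\psi)\in L^{\infty}(\H^{n})$, and then applies (iii) to tensor-product functions $f(z,t)=g(z)h(t)$ to derive the functional equation $\Phi_{\lambda}(t-s)=e^{-i\lambda s}\Phi_{\lambda}(t)$ for $\Phi_{\lambda}(t)=\int g(z)F_{\lambda}((z,t);\varphi,\psi)\,dz$. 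A one-line mollification upgrades this to all $s,t$, giving $F_{\lambda}((z,t);\varphi,\psi)=e^{i\lambda t}F_{\lambda}((z,0);\varphi,\psi)$ and hence $(T_{\lambda}f\varphi,\psi)=\int f^{\lambda}(z)F_{\lambda}((z,0);\varphi,\psi)\,dz$. Your approximation route (showing $\{f:f^{\lambda}=0\}$ is the closed span of $\{R_{(0,s)}h-e^{-i\lambda s}h\}$) is also correct---the annihilator computation reduces to the very same functional equation for $L^{\infty}$ kernels---but the paper's duality argument gets there more directly without an intermediate density claim.
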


\begin{proof}  Let $\tl(f) = (Tf)(\lam),$ for $\lam \in
\R^*, \ f\in \Lh.$  For fixed $\varphi,\psi \in \Lr,$ the map
defined on $\Lh$ by $f \mapsto\left(\tl(f) \ \varphi, \
\psi\right)$ $~$satisfies
\begin{eqnarray*}
  |\left(\tl(f) \ \varphi, \ \psi\right)|
&\leq& \|\tl(f)\| \ \|\varphi\|_{2} \ \|\psi\|_{2} \hspace{3cm} \\
   &\leq& C \ \|f\|_{L^{1}(\Hn)} \ \|\varphi\|_{2} \ \|\psi\|_{2}.
\end{eqnarray*}
i.e., the above map defines a continuous linear functional on
$\Lh$, and so there is $\Fl \in L^{\infty}(\Hn)$ such that
$$(\theequation) \label{4.1}\hspace{1cm} \tl(f) = \int_{\Hn} f(z,t) \ \Fl
((z,t);\varphi,\psi) \ dz \ dt, \ f \in \Lh.\hspace{1cm}$$\\
Let $f\in \Lh$ be of the form $f(z,t) =g(z) \ h(t).$  Then
\begin{eqnarray}
   \nonumber \tl(f) &=& \int_{\Hn} \ f \zt \ \flam((z,t);\varphi,\psi) \ dz \ dt \\
    \nonumber &=&  \int_{\R} h(t) \ \left(\int_{\Cn} g(z) \ \flam(\zt;\varphi,\psi) \ dz \right) \ dt \hspace{1cm}\\
   \hspace{2.5cm} &=& \int_{\R} h(t) \ \Phi_{\lam}(t) \ dt.
\end{eqnarray}
where $\Phi_{\lam}(t) = \ \int_{\Cn} g(z) \
\flam(\zt;\varphi,\psi) \ dz.$ But (iii) gives
 $$\ \tl(f) \ \emils = \ \tl(R_{(0,s)} \ f)  =  \ \int_{\R} h(t)\ \Phi_{\lam}(t-s) \ dt$$
 Thus we get $\plam(t-s) = \emils \ \plam(t) \mbox{\ for all \ } s\in \R, \ a.e. \ t \ \in \R$.
 Let $\Psi$ be a Schwartz class function on $\R$ such that $\widehat{\Psi}(\lam) \neq
 0.$  Then
$$\int_{\R} \plam(t-s) \ \Psi(s) \ ds = \int_{\R} \emils \ \plam(t)
\ \Psi(s) \ ds = \ \widehat{\Psi}(\lam) \ \plam(t).$$ As the left
hand side is a smooth bounded function of $t$, so is $\plam$.
Thus we get that $\plam(t-s) = \emils \ \plam(t) \mbox{\ for all \
 } s,t \in \R$.  In particular
 $\plam(t) = \eilt \ \plam(0)$ for all $t \in \R.$  Thus for every
 $g \in \Lc$, the function $$\int_{\Cn} g(z) \ \flam(\zt;\phsi) \
 dz$$
  is continuous and satisfies
 $$\int_{\Cn} g(z) \ \flam(\zt;\phsi) \
 dz = \ \eilt \ \int_{\Cn} g(z) \ \flam(\zo;\phsi) \
 dz$$
 Taking 
$$ g(z) = |B_{r}(w)|^{-1} \ \chi_{B_r(w)} (z) $$ where $ |B_r(w)| $ is the 
volume of the ball of radius $ r $ centered at $ w $ and and letting 
$r \rightarrow 0 $, we see that for almost every $w \in \Cn$,
 $$\flam((w,t);\phsi) = \ \eilt \ \flam((w,0); \phsi).$$
 This leads to the equation
 \begin{eqnarray*}
  (\tl (f) \varphi, \psi)  &=& \int_{\Hn} \ f \zt \ \eilt \ \flam(\zo; \phsi) \ dz \ dt. \\
    &=& \int_{\Cn} \ f^{\lam}(z) \ F_{\lam}(\zo;\phsi) \ dz.
 \end{eqnarray*}
Hence $\tl(f)$ depends only on $f^{\lam}$ and satisfies
$$\nm \tl(f) \nm \leq C \ \nm f^{\lam} \nm_{\Lc}.$$
For a given $\lam$, fix $\psi \in L^1(\R)$ such that
$\widehat{\psi}(-\lam) = 1$ and define
$$\sl(g) = \tl(g(z) \ \psi(t)) = \tl(f), \ f \zt = g(z) \ \psi(t).$$
Then it is clear that $\nm \sl(g) \nm \leq C \ \nm g\nm_{\Lc}.$
Moreover, for $g_1,g_2 \in ~\Lc$, with $f_j \zt = g_j(z) \
\psi(t), \ j=1,2,$ we have
$$(f_1*f_2)^{\lam}(z) = g_1 *_{\lam} \ g_2(z) = g_1 *_{\lam}g_2(z) \ \widehat{\psi}(-\lam)$$
and hence $(f_1*f_2)^{\lam} = ((g_1*_{\lam} g_2) \ \psi)^{\lam}.$
This shows that
$$\sl(g_1*_{\lam}g_2) = \tl((g_1*_{\lam}g_2)(z) \ \psi(t)) = \tl(f_1*f_2).$$
and as $\tl(f_1*f_2) = \tl(f_1) \ \tl(f_2) = \sl(g_1) \ \sl(g_2)$,
we get \linebreak
$\sl(g_1*_{\lam}g_2)=\ \sl(g_1) \ \sl(g_2).$\\

 As the operator
$\sl$ satisfies the hypotheses of Theorem 3.1, for each $\lam \in
A,$ there is a decomposition $\Lr = \CH^{\lam} \bigoplus \
V^{\lam}$ and a unitary representation $\rl$ of $\Hn$ on
$\CH^{\lam}$ such that
$$\sl(f)|_{\CH^{\lam}} =   \ \int_{\Cn} f(z) \ \rl \zo \ dz, \ f \in \Lc.$$
In particular, for $f \in L^1(\Hn)$
\begin{eqnarray*}
 \sl(\fl)|_{\CH^{\lam}}   &=&    \ \int_{\Cn} \fl(z) \ \rl \zo \ dz, \\
   &=&   \ \int_{\Hn} f \zt \ \rho_{\lambda}(z,t) \
   dz \ dt.
\end{eqnarray*}
This gives for all $f \in L^1(\Hn)$ and $\lam \in A$,
$$(Tf)(\lam)|_{\CH^{\lam}} =   \ \int_{\Hn} f\zt \ \rho_{\lambda}(z,t) \ dz \
dt.$$ \end{proof}
 In the above theorem, replacing hypothesis (iii)
with a stronger assumption, we obtain the following

\begin{thm}  Let $T:\Lh \rightarrow L^{\infty}(\R^*,
\BLR,d \mu)$ be
a nonzero continuous linear map satisfying\\\\
\noindent (i)  $T(f^*) (\lam) = (Tf)(\lam)^* , \ \lam\ \in \R^*, \
f\in \Lh,$\\\\
\noindent(ii)  $T(f*g)(\lam) = (Tf)(\lam) \ (Tg) (\lam), \ \lam
\in \R^*, \ f,g \in \Lh,$ and\\\\
\noindent (iii) $T(\rzt \ f)(\lam) = (Tf)(\lam) \ \plzts, \
\lam \in \R^*, \ f \in \Lh,(z,t) \in ~\Hn.$\\
$$\mbox{Then \hspace{2cm} }(Tf)(\lam) = \widehat{f}(\lam), \ \lam
\in A, \ f \in \Lh,\hspace{3cm}$$
 where $A:=\{\lam \in \R^* : \ (Tf)(\lam) \neq 0 \
\mbox{ \ for \ some \ } f \in \Lh\}.\hspace{3.5cm}$
\end{thm}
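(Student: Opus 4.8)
The plan is to reduce Theorem 4.3 to Theorem 4.2 by showing that hypothesis (iii) in the stronger form (with $\plzts$ on the right) forces the representation $\rho_\lambda$ to be a direct sum of copies of $\pi_\lambda$ and, crucially, forces $V^\lambda = \{0\}$ for $\lambda \in A$. First I would observe that the new hypothesis (iii), specialised to $z=0$, gives exactly hypothesis (iii) of Theorem 4.2, and hypothesis (i) here implies hypothesis (i) there (since $\|\cdot\|^*$ and the identity agree on self-adjoint-producing data after one applies (i) twice). Hence Theorem 4.2 applies: for each $\lambda\in A$ there is a decomposition $\Lr = \CH^\lambda \oplus V^\lambda$ and a unitary representation $\rho_\lambda$ of $\Hn$ on $\CH^\lambda$ with $(Tf)(\lam)|_{\CH^\lambda} = \int_{\Hn} f\zt\,\rho_\lambda\zt\,dz\,dt$.

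Next I would extract the extra structure coming from the full (iii). Applying (iii) with general $(z,t)$ and using Property 3 (which $\widehat{\phantom{f}}$ satisfies) together with the integral representation just obtained, one gets, for $f$ supported so that $f^\lambda$ ranges over a dense subset, the intertwining relation
\begin{eqnarray*}
\rho_\lambda(z,t)\, = \, \pi_\lambda(z,t) \quad \text{on } \CH^\lambda,
\end{eqnarray*}
in the sense that $\CH^\lambda$ is $\pi_\lambda$-invariant and $\rho_\lambda$ is the restriction of $\pi_\lambda$ to it; more precisely the relation $T(\rzt f)(\lam) = (Tf)(\lam)\plzts$ combined with $T(f)(\lam) = W_\lambda(f^\lambda)$ on $\CH^\lambda$ and the identity $(\rzt f)^\lambda = f^\lambda *_\lambda (\text{shifted delta})$ forces $\rho_\lambda\zo = \pi_\lambda\zo$ on $\CH^\lambda$ after unwinding the construction $\rho_\alpha^j\zt = U_\alpha^j \pi_\lambda\zt U_\alpha^{j*}$ from the proof of Theorem 3.1. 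Since each $\CH_\alpha^j$ carries a representation unitarily equivalent (via $U_\alpha^j$) to $\pi_\lambda$, and the irreducibility of $\pi_\lambda$ pins down the intertwiners up to scalars, the stronger (iii) forces these $U_\alpha^j$ to act as (scalar multiples of) genuine intertwining isometries, so that on $\CH^\lambda$ the operator $(Tf)(\lambda)$ is literally $\int_{\Hn} f\zt\,\pi_\lambda\zt\,dz\,dt = \widehat{f}(\lambda)$ compressed to $\CH^\lambda$.

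Then I would show $V^\lambda = \{0\}$ for $\lambda\in A$. Suppose $v\in V^\lambda$, $v\neq 0$. By definition $(Tf)(\lam)v = 0$ for all $f$. But take $f\zt = g(z)\psi(t)$ with $\widehat\psi(-\lambda)=1$ as in the proof of Theorem 4.2, so $(Tf)(\lambda) = S_\lambda(g) = W_\lambda(g)$ compressed appropriately; choosing $g$ with $W_\lambda(g)$ an arbitrary rank-one operator $\xi\mapsto (\xi,\eta_1)\eta_2$ (possible since special Hermite functions span densely and $W_\lambda(\phmn^\lambda)\Phi_\beta^\lambda = \pn\delta_{\beta\mu}\Phi_\nu^\lambda$), one sees that $(Tf)(\lambda)$ ranges over enough operators to force $v = 0$ — the only vector annihilated by all of them is the zero vector, because the $\widehat{\phab^\lambda\,\psi}$ exhaust a dense subalgebra acting irreducibly. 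Hence $\CH^\lambda = \Lr$ and $(Tf)(\lambda) = \widehat{f}(\lambda)$ on all of $\Lr$.

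The main obstacle I expect is the second step: passing from ``$\rho_\lambda$ is \emph{some} unitary representation with $(Tf)(\lambda)|_{\CH^\lambda}=\int f\rho_\lambda$'' to ``$\rho_\lambda$ is \emph{equal} to $\pi_\lambda$ (not just equivalent to a multiple of it)''. The subtlety is that Theorem 3.1 only delivers $\rho_\lambda$ as a direct sum $\bigoplus_j U_\alpha^j \pi_\lambda U_\alpha^{j*}$ of copies of $\pi_\lambda$, so a priori $(Tf)(\lambda)$ is $\widehat f(\lambda)$ only up to conjugation by a partial isometry and up to multiplicity; it is precisely the \emph{stronger} form of (iii) — carrying the full $\plzts$ rather than merely the central character $\emilt$ — that rigidifies this, via the fact that the intertwining operators for the irreducible $\pi_\lambda$ are scalars (Schur's lemma), collapsing each block to a single faithful copy and killing $V^\lambda$. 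Making this rigidification argument airtight, rather than hand-waving it from ``irreducibility'', is where the real work lies.
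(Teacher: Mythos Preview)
Your reduction to Theorem 4.2 is correct, but steps 2 and 3 contain a genuine gap and are in the wrong order relative to the clean argument.

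The fatal problem is in your step 3. You argue that for $v\in V^\lambda$ one can choose $f$ so that $(Tf)(\lambda)$ is an ``arbitrary rank-one operator'' and hence $v=0$. But this is circular: the identification $(Tf)(\lambda)=S_\lambda(g)\approx W_\lambda(g)$ holds only on $\CH^\lambda$, while on $V^\lambda$ one has $(Tf)(\lambda)v=0$ \emph{by definition of $V^\lambda$}, for every $f$. No choice of $g$ changes that. The operators $(Tf)(\lambda)$ do not ``act irreducibly'' on $V^\lambda$; they act as zero there. So your argument proves nothing about $V^\lambda$.

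What you are missing is the direct use of hypothesis (iii) on $V^\lambda$ itself. If $v\in V^\lambda$ then $T_\lambda(R_{(z,t)}f)v=0$ for all $f$ (since $R_{(z,t)}f\in\Lh$), and by (iii) this says $T_\lambda(f)\,\pi_\lambda(z,t)^*v=0$ for all $f$ and all $(z,t)$. Hence $\pi_\lambda(z,t)^*v\in V^\lambda$, so $V^\lambda$ is $\pi_\lambda$-invariant. Now irreducibility of $\pi_\lambda$ forces $V^\lambda=\{0\}$ or $\Lr$, and for $\lambda\in A$ the latter is excluded. This is the paper's argument, and it is one line.

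With $V^\lambda=\{0\}$ in hand, your step 2 collapses as well: one now has $\CH^\lambda=\Lr$, so $\rho_\lambda$ and $\pi_\lambda$ act on the same space, and comparing the two expressions $T(R_{(z,t)}f)(\lambda)=(Tf)(\lambda)\rho_\lambda(z,t)^*$ (from the integral formula) and $T(R_{(z,t)}f)(\lambda)=(Tf)(\lambda)\pi_\lambda(z,t)^*$ (from (iii)) gives $(Tf)(\lambda)\bigl[\pi_\lambda(z,t)^*-\rho_\lambda(z,t)^*\bigr]\varphi=0$; the bracketed vector lies in $V^\lambda=\{0\}$, so $\rho_\lambda=\pi_\lambda$. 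No Schur's lemma, no analysis of the $U_\alpha^j$, no discussion of multiplicity is needed. Your instinct that irreducibility of $\pi_\lambda$ is the key was right, but it should be applied to $V^\lambda$, not to the range of $T$.
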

\begin{proof}
By the previous theorem, for each $\lam \in A,$ there is a\\
decomposition
 $\Lr = \CH^{\lam}  \bigoplus  V^{\lam}$, and a unitary
 representation $\rl$ of $\Hn$ on $\CH^{\lam}$ such that
\begin{eqnarray}
 \label{4.4} \hspace{1cm}
  \ T(f)(\lam) = \int _{\Hn} \ f \zt \ \rl \zt \ dz \ dt, \mbox{ \ on \ } \CH^{\lam}.\hspace{2.5cm}
  \end{eqnarray}
\noindent Let $V^{\lambda}=\{v \in \Lr : \ \tl(f) (v) = 0 \
\forall \ f \in \Lh\}$.  Let $v \in V^{\lam}.$  Then
\begin{eqnarray*}
  \tl(f) \ v &=&  0 \ \mbox{ \ for \ all \ } f \in \Lh \\
 \mbox{gives} \ \ \ \ \ \ \tl(f)\ \plzts v&=&   0 \ \mbox{for \
 all}
\ f \in \Lh, \ \mbox{for \ all} \ \zt \in \Hn.
 \end{eqnarray*}
This implies that $V^{\lambda}$ is invariant under $\pl.$  Now the
irreducibility of ~$\pi_{\lambda}$ forces $V^{\lam} = \Lr$ or
$V^{\lam}=(0)$.  If $\lam \in A$, then $V^{\lam}\neq \Lr$ and so
$V^{\lam}=\{0\}.$\\
But equation (\ref{4.4}) gives $T(\rzt \ f)(\lam)=(Tf)(\lam) \ \rl
\zt ^*$. This, combined with (iii) of the hypothesis implies for
each $f \in \Lh, \lam \in A$ and $\varphi \in \Lr,$ $$(Tf)(\lam) \
\plzts \ \varphi = (Tf)(\lam) \ \rl \zt ^* \ \varphi,$$ which
gives
$$(Tf)(\lam) \ [(
\plzts  - \rl \zt ^* )\ \ \varphi] =0.$$  That is, the term in the
square bracket is in $V^{\lam}$ and so it is $0$. Thus for all
$\lam \in A$ and $\zt \in \Hn, \ \rl \zt = \plzt.$ This gives
$$(Tf)(\lam) = \widehat{f}(\lam), \ \lam \in A,\ f \in \Lh.$$
\end{proof}

When $T$ is an operator from $\llh$ onto $L^{2}(\R^*, S_{2},
d\mu),$ we obtain the following characterisation.

\begin{thm}
Let $T: \llh \rightarrow L^{2}(\R^*, S_{2}, d\mu)$ be a nonzero\\
surjective continuous linear map satisfying\\

\noindent(i)  $T(f*g)(\lam) = (Tf)(\lam) \ (Tg) (\lam), \lam \in
\R^*, \ f,g,f*g \in \llh,$ and \\

\noindent(ii) $T(\rzt \ f)(\lam) = (Tf)(\lam) \ \plzts,
\lam \in \R^*, f \in \llh,(z,t) \in \Hn.$\\

\noindent Then $T(f)(\lam) = \widehat{f}(\lam) \mbox{ \ for \  all
\ } \lam \in \R^*, \ f\in \llh.$
\end{thm}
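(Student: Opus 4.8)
The plan is to compare $T$ with the group Fourier transform by means of the Plancherel theorem for $\Hn$. Write $\CF f=\wh f$ for the Fourier transform; it extends to a bounded linear bijection $\CF\colon\llh\to L^2(\R^*,S_2,d\mu)$ with bounded inverse, and by Properties 2 and 3 it already satisfies the two hypotheses of the present theorem. Hence $R:=T\circ\CF^{-1}$ is a bounded \emph{surjective} linear operator on $L^2(\R^*,S_2,d\mu)$, and the theorem is equivalent to $R=\id$; indeed that gives $T(f)(\lam)=\wh f(\lam)$ for a.e.\ $\lam$ and every $f$, and in particular $A=\R^*$.

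I would first extract three structural properties of $R$. Taking $(z,t)=(0,s)$ in hypothesis (ii) and using $\pl(0,s)=\eils\,\id$ gives $T\circ R_{(0,s)}=M_{\emils}\circ T$, where $M_{\emils}$ denotes multiplication by $\emils$; the same identity holds for $\CF$ in place of $T$ (Property 3), so $R$ commutes with $M_{\emils}$ for every $s\in\R$. Since these multiplication operators generate the maximal abelian algebra $L^\infty(\R^*)$ acting diagonally on $L^2(\R^*,S_2,d\mu)$, the operator $R$ commutes with that whole algebra and is therefore \emph{decomposable}: $R=\int^{\oplus}R_\lam\,d\mu(\lam)$ with $R_\lam\in\CB(S_2)$ essentially bounded. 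Taking instead a general $(z,0)$ in (ii) and again comparing with Property 3 shows, fibrewise, that each $R_\lam$ commutes with right multiplication by $\plzos$ on $S_2$, for every $z\in\Cn$. Finally, hypothesis (i) together with Property 2 yields $R(\wh f\,\wh g)=R(\wh f)\,R(\wh g)$ whenever $f,g,f*g\in\llh$; taking $f,g$ in the Schwartz class $\mathcal{S}(\Hn)$ (which is closed under convolution and whose Fourier transforms at any fixed $\lam$ are dense in $S_2$), this descends to $R_\lam(XY)=R_\lam(X)R_\lam(Y)$ for $X,Y$ in a dense subalgebra of $S_2$ and a.e.\ $\lam$.

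Now I would work on a single fibre. Since $\pl$ is irreducible, $\{\pl(z,0):z\in\Cn\}''=\BLR$, and hence an operator on $S_2$ commuting with right multiplication by every $\plzos$ is necessarily left multiplication by some $A_\lam\in\BLR$; thus $R_\lam X=A_\lam X$. Inserting this into the multiplicativity relation gives $A_\lam X(\id-A_\lam)Y=0$ for all $X,Y$ in a dense set, and testing on rank-one operators forces $A_\lam\in\{0,\id\}$ for a.e.\ $\lam$. If $A_\lam=0$ on a set of positive $\mu$-measure, every element of the range of $R$ would vanish there, contradicting the surjectivity of $R$ (hence of $T$). Therefore $A_\lam=\id$ for a.e.\ $\lam$, i.e.\ $R=\id$, which is the assertion.

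The genuinely routine ingredients are Schur's lemma for $\pl$ and the elementary observation that a multiplicative left multiplication on $S_2$ is $0$ or the identity. The step I expect to be the main obstacle is the measure-theoretic bookkeeping behind the direct-integral reduction: one must justify that commuting with the diagonal $L^\infty(\R^*)$ really produces a fibre decomposition $R=\int^{\oplus}R_\lam\,d\mu(\lam)$, and then transfer the identities for $R$ (commutation with all $\plzos$, and multiplicativity) to \emph{almost every} fibre simultaneously, using a countable dense subset of $\mathcal{S}(\Hn)$, joint measurability of $\lam\mapsto\wh f(\lam)$, and a measurable-selection argument in the surjectivity step. An alternative route, closer to the tools built above, applies the Corollary of Theorem 3.1 to each fibre $\tl$ of the decomposition $\llh\cong\int^{\oplus}\Lc$ (partial Fourier transform in $t$): it yields a finite-multiplicity splitting $\CH^\lam=\bigoplus_{j=1}^{m_\lam}\mathcal{H}_\lam^j$ with $\tl(g)|_{\CH^\lam}=\int_{\Cn}g(z)\,\rl(z,0)\,dz$ and $\rl\cong\bigoplus^{m_\lam}\pl$; surjectivity of $T$ forces $V^\lam=\{0\}$ and $m_\lam=1$, and hypothesis (ii) for general $z$ forces the unitary intertwining $\rl$ with $\pl$ to be scalar, giving once more $\tl=W_\lam$, i.e.\ $T(f)(\lam)=\wh f(\lam)$.
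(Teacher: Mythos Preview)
Your main argument is correct and parallels the paper's: both compose $T$ with the inverse Plancherel map (your $R=T\circ\CF^{-1}$ on the Fourier side, the paper's $U=\CF^{-1}\circ T$ on the group side), identify this composite as a Fourier multiplier, and then combine multiplicativity with surjectivity to force the multiplier to be the identity. The difference is one of packaging. The paper reaches the multiplier form $\widehat{Uf}(\lam)=m(\lam)\,\widehat f(\lam)$ in a single stroke by noting that $U$ commutes with \emph{all} right translations $\rzt$ and invoking the known characterisation of right-translation-invariant operators on $\llh$ from \cite{Ra}; you instead rebuild that statement from first principles, using commutation with the diagonal $L^\infty(\R^*)$ to get the fibre decomposition $R=\int^\oplus R_\lam\,d\mu$ and then Schur's lemma for $\pl$ to force each $R_\lam$ to be a left multiplication. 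Your route is thus more self-contained but incurs exactly the a.e.\ bookkeeping you flag; the paper's is shorter but outsources that work to an external reference. In the endgame the paper also uses surjectivity a bit differently---it lets $h=Uf$ range over all of $\llh$, so that $\widehat h(\lam)\bigl(\widehat g(\lam)-\widehat{Ug}(\lam)\bigr)=0$ for all $h$ directly kills $(I-m(\lam))\widehat g(\lam)$, rather than first deducing $A_\lam\in\{0,\id\}$ and then excluding $0$---but the two conclusions are interchangeable. Your alternative route via the Corollary to Theorem~3.1 is not the one the paper takes here.
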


\begin{proof}
Define $U: \llh \rightarrow \llh$ as $Uf = g$ if $Tf =
\widehat{g}$,\\ i.e., $U$ is such that $(Tf)(\lam) =
\widehat{(Uf)}(\lam).$ Then $U$ is surjective, linear and
~continuous.\\

If $f_1,f_2,f_1*f_2 \in \llh$ are such that $Uf_1 = g_1, \ Uf_2 =
g_2,$ and\\
 $U(f_1*f_2)=g,$ then $\widehat{g} = T(f_1*f_2) =
\widehat{g_1} \ \widehat{g_2} = (g_1*g_2)\widehat{} \ $. This
gives
\begin{eqnarray}
 \hspace{1cm}\label{4.7}U(f_1*f_2) = \ U(f_1) * U(f_2) \mbox{ \ for \ all \ }
f_1,f_2,f_1*f_2 \in \llh. \end{eqnarray}

\noindent Now, (ii) of the hypothesis and the similar property of
the Fourier transform give
$$(U \ \rzt \ f)\widehat{} \ (\lam) = (Uf)\widehat{} \ (\lam) \ \plzts = (\rzt \ Uf)\widehat{} \ (\lam)$$
This gives $U \ \rzt \ f = \rzt \ Uf$ for all $f \in \llh$, i.e.,
$U$ is right-translation invariant.  This implies from \cite{Ra}
that
$$\widehat{(Uf)}(\lam) = \ m(\lam) \ \widehat{f}(\lam), \mbox{ \ for \ some \ } m \in L^{\infty}(\R^*, \BLR,d\mu).$$
This gives \begin{eqnarray}
 \label{4.5} (U(f*g))
\widehat{} \ (\lam) = \ m(\lam) \ \widehat{f}(\lam) \
\widehat{g}(\lam) = (Uf)\widehat{} \ (\lam) \ \widehat{g}(\lam).
\end{eqnarray}

\noindent But by (\ref{4.7}),
\begin{eqnarray}
 \label{4.6}(U(f*g))\widehat{}
\ (\lam) = (Uf*Ug)\widehat{} \ (\lam) = \widehat{(Uf)}(\lam) \
\widehat{(Ug)}(\lam).
\end{eqnarray}

\noindent From (\ref{4.5}),\ (\ref{4.6}) and the surjectivity of
$U$, we get
$$\widehat{h}(\lam)\ ((\widehat{g}(\lam) - \widehat{(Ug)}(\lam)) =0,\ \mbox{for \ all \ } g,h \in \llh.$$
This implies that the range $R((g-Ug) \widehat{} \ (\lam))$ is
contained in the kernel of $\widehat{h}(\lam)$ for all $h \in
\llh$, which forces $(g-Ug)\widehat{} \ (\lam) = 0$ for every
$\lam \in \llh$.  Hence $Ug = g$ for all $g \in \llh$, and thus
$$(Tf)(\lam) = \widehat{f}(\lam), \mbox{ \ for \ all \ } \lam \in
\R^* , \  f \in \llh.$$
\end{proof}
\begin{center}
{\bf Acknowledgments}

\end{center}
This work is supported by J. C. Bose Fellowship from the
$~$Department of Science and Technology (DST).  The first author
is thankful to her $~$advisor Prof. K. Parthasarathy for useful
discussions and to the $~$National Board for Higher Mathematics,
India, for the financial support.

\end{document}